\documentclass[reqno,10pt,centertags]{amsart} 
\usepackage{amsmath,amsthm,amscd,amssymb,latexsym,esint,upref,stmaryrd,
enumerate,color,verbatim,yfonts,enumitem,comment}
\setlength{\textwidth}{\paperwidth}
\addtolength{\textwidth}{-2.3in}
\calclayout
\usepackage{hyperref} 
\newcommand*{\mailto}[1]{\href{mailto:#1}{\nolinkurl{#1}}}

\setenumerate{label=$($\textit{\roman*}$)$}


\newcommand{\bbN}{{\mathbb{N}}}

\newcommand{\bbR}{{\mathbb{R}}}

\renewcommand{\a}{\alpha}
\renewcommand{\b}{\beta}
\newcommand{\g}{\gamma}

\newcommand{\z}{\zeta}



\renewcommand{\Re}{\text{\rm Re}}
\renewcommand{\Im}{\text{\rm Im}}
\renewcommand{\ln}{\text{\rm ln}}

\newcommand{\norm}[1]{\lVert#1\rVert}

\newcommand{\lb}{\label}

\newcommand{\wti}{\widetilde}

\newcommand{\dott}{\,\cdot\,}

\newcommand{\bi}{\bibitem}


\newcommand{\Lt}{{L^2((a,b);dx)}}

\makeatletter
\def\theequation{\@arabic\c@equation}


\allowdisplaybreaks 
\numberwithin{equation}{section}

\newtheorem{theorem}{Theorem}[section]

\newtheorem{lemma}[theorem]{Lemma}
\newtheorem{corollary}[theorem]{Corollary}
\newtheorem{definition}[theorem]{Definition}

\newtheorem{example}[theorem]{Example}

\theoremstyle{remark}

\newenvironment{remark}[1][]{\refstepcounter{theorem}\par\medskip\noindent\textit{Remark~$\theexample. #1$} \rmfamily}{{\ }\hfill $\diamond$ \vspace{6pt}}


\begin{document}

\title[An elementary approach to integral inequalities]{An elementary approach to integral inequalities involving higher order derivatives}

\author[B.\ Rosenzweig]{Bart Rosenzweig}
\address[B.\ Rosenzweig]{Department of Mathematics, The Ohio State University \\
100 Math Tower, 231 West 18th Avenue, Columbus, OH 43210, USA}
\email{\mailto{rosenzweig.26@osu.edu}}

\author[J.\ Stanfill]{Jonathan Stanfill}
\address[J.\ Stanfill]{Division of Geodetic Science, School of Earth Sciences, The Ohio State University \\
275 Mendenhall Laboratory, 125 South Oval Mall, Columbus, OH 43210, USA}
\email{\mailto{stanfill.13@osu.edu}}

\date{\today}
\@namedef{subjclassname@2020}{\textup{2020} Mathematics Subject Classification}
\subjclass[2020]{Primary: 26D10, 34B30, 47A30; Secondary: 34B24, 34C10, 34L15.}
\keywords{Factorizations of differential operators, integral inequalities, Hardy improving potential, Bessel pair.}

\begin{abstract}
Motivated by previous work leveraging factorizations of second- and fourth-order differential operators, a general integral inequality involving higher order derivatives is proven by elementary means. It is then shown how this framework generalizes the notions of Hardy improving potentials and Bessel pairs. Numerous examples of inequalities both new and previously known in the literature are given that may be proven in this manner.
\end{abstract}

\maketitle

\section{Introduction}\lb{s1}

Inequalities which refine and generalize the classical Hardy's inequality
\begin{equation}\label{eq:hardy}
    \int_0^\infty |f'(x)|^2\,dx \geq \frac{1}{4}\int_0^\infty |x|^{-2}|f(x)|^2\,dx,\quad f\in C_0^\infty((0,\infty)),
\end{equation}    
and Rellich's inequality
\begin{equation}\label{eq:rellich}
    \int_0^\infty |f''(x)|^2\,dx \geq \frac{9}{16}\int_0^\infty |x|^{-4}|f(x)|^2\,dx,\quad f\in C_0^\infty((0,\infty)),
\end{equation}    
abound in the literature (cf. \cite{BEL15,GM13,KPS17} as well as the extensive references within \cite{GPPS23}). These two are the first in an infinite sequence of inequalities involving higher-order derivatives, all of which may be refined, meaning that the lower bound may be replaced by a strictly larger one, often a complicated combination of integrals of lower derivatives with respect to other weight functions. Such inequalities also hold in higher dimensions, and involve $L^p$ norms for $p \neq 2$, though in this paper we restrict our focus to the one-dimensional, $p=2$ case. Similar results to those proven here will be addressed in future work for higher dimensions and general $p$.

Numerous recent papers \cite{GL18,GNP23,GPPS23,GPS21,GPS24} prove refinements of \ref{eq:hardy} and \ref{eq:rellich} (including in higher dimensions) by factorizing an appropriate second- or fourth-order differential operator $S$ in the form $S=T^+T$, where $T$ is a first- or second-order operator and $T^+$ is its formal adjoint. Then by careful partial integration it is shown that the desired inequality is of the form $\langle f,T^+T f\rangle_{L^2} \geq 0$, and therefore manifestly true. In this work we simplify and systematize this approach, while also greatly extending it to  prove a generic integral inequality motivated by these previous studies. In particular, our main theorem is the following (see also Remark \ref{rem2.4} for relaxing the assumptions):

\begin{theorem}\label{t3.2}
Let $f\in C^\infty_0((a,b))$ $($infinitely differentiable continuous functions with compact support$)$, $-\infty\leq a<b\leq\infty$. Then for real-valued $a_m\in C((a,b))$ with $0\leq m\leq n$ 
and $ (a_j a_k)\in C^{k-j}((a,b))$ with $0\leq j< k\leq n,$ the following integral inequality holds$:$
\begin{equation}\label{3.2}
\int_a^b \big|a_n(x)f^{(n)} (x)\big|^2 \, dx\geq  \sum_{m=0}^{n-1}\int_a^b c_{n,m}(x)\big|f^{(m)}(x)\big|^2\, dx, 
\end{equation}
with, for $0\leq m\leq n-1$,
\begin{align}\label{3.6}
c_{n,m}(x)=-a_m^2(x)-\sum_{j=0}^m \sum_{k=\max\{j+1,2m-j\}}^n (-1)^{k-j} t_{k-j,m-j} &[a_j(x)a_k(x)]^{(k+j-2m)},
\end{align}
where $t_{n,k}$ are given explicitly by $t_{0,0}=2$, $t_{n,0} = 1,\ n \geq 1$, and for $1\leq k\leq \lfloor \tfrac{n}{2}\rfloor$,
\begin{equation}
\label{2.13}
t_{n,k}=(-1)^k \left[\binom{n-k}{k}+\binom{n-k-1}{k-1} \right]=(-1)^k \frac{n}{k}\binom{n-k-1}{k-1}.
\end{equation}
\end{theorem}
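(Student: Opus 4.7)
The plan is to exploit the manifest positivity of $\int_a^b |Tf|^2\,dx$ for the operator
$$(Tf)(x):=\sum_{m=0}^n a_m(x) f^{(m)}(x).$$
Expanding the square and separating diagonal from off-diagonal terms gives
$$\int_a^b |Tf|^2\,dx=\sum_{m=0}^n\int_a^b a_m^2\,|f^{(m)}|^2\,dx+2\sum_{0\leq j<k\leq n}I_{j,k}(a_j a_k),$$
where $I_{j,k}(g):=\int_a^b g\,f^{(j)}f^{(k)}\,dx$. The theorem reduces to rewriting each cross term $I_{j,k}(a_j a_k)$ via repeated integration by parts as a combination of integrals of the form $\int_a^b(a_j a_k)^{(s)}|f^{(m)}|^2\,dx$ and matching the resulting coefficients with \eqref{3.6}.

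The key lemma, proved by induction on $d:=k-j$, is
$$I_{j,j+d}(g)=\frac{(-1)^d}{2}\sum_{r=0}^{\lfloor d/2\rfloor} t_{d,r}\int_a^b g^{(d-2r)}\,|f^{(j+r)}|^2\,dx.$$
A single integration by parts using $f^{(j+d)}=(f^{(j+d-1)})'$ produces the recursion
$$I_{j,j+d}(g)=-I_{j,j+d-1}(g')-I_{j+1,j+d-1}(g),\qquad d\geq 2,$$
with base cases $I_{j,j}(g)=\int_a^b g\,|f^{(j)}|^2\,dx$ and $I_{j,j+1}(g)=-\tfrac12\int_a^b g'\,|f^{(j)}|^2\,dx$ obtained directly. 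Boundary contributions vanish by compact support, and the hypothesis $a_j a_k\in C^{k-j}((a,b))$ supplies all derivatives of $g=a_j a_k$ that the iteration requires. Substituting the inductive formula into the recursion and reindexing the second sum via $r\mapsto r-1$ shows that the identity persists provided
$$t_{d,r}=t_{d-1,r}-t_{d-2,r-1},$$
which is verified by two applications of Pascal's rule $\binom{N}{R}=\binom{N-1}{R}+\binom{N-1}{R-1}$ to \eqref{2.13}; the conventions $t_{d,r}=0$ outside $0\leq r\leq\lfloor d/2\rfloor$, together with $t_{0,0}=2$ and $t_{d,0}=1$ for $d\geq 1$, align the base cases.

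With the lemma in hand, I would collect all contributions to $|f^{(m)}|^2$ in $\int_a^b|Tf|^2\,dx$ by setting $m=j+r$. The constraints $0\leq j\leq m$, $j<k\leq n$, and $r\leq\lfloor(k-j)/2\rfloor$ translate to $\max\{j+1,\,2m-j\}\leq k\leq n$, while the order of the derivative on $a_j a_k$ becomes $d-2r=k+j-2m$. A short argument rules out any cross-term contribution at $m=n$, leaving the diagonal $\int_a^b a_n^2|f^{(n)}|^2\,dx$ untouched. For $m<n$, summing the diagonal $a_m^2$ with the off-diagonal terms yields exactly $-c_{n,m}(x)$ as the coefficient of $|f^{(m)}|^2$, so
$$0\leq\int_a^b |Tf|^2\,dx=\int_a^b a_n^2\,|f^{(n)}|^2\,dx-\sum_{m=0}^{n-1}\int_a^b c_{n,m}(x)\,|f^{(m)}|^2\,dx,$$
which is \eqref{3.2}. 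The main obstacle is purely combinatorial: identifying the coefficients produced by repeated integration by parts with those in \eqref{2.13}. Once the two-term recursion is matched against Pascal's identity, the remaining bookkeeping and reindexing are routine.
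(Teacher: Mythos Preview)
Your proposal is correct and follows essentially the same route as the paper: expand $\int_a^b|Tf|^2\,dx\ge 0$, reduce the cross terms via an inductive identity governed by the recursion $t_{d,r}=t_{d-1,r}-t_{d-2,r-1}$, and reindex by $m=j+r$. The only packaging difference is that the paper isolates the \emph{pointwise} identity $2f f^{(d)}=\sum_r t_{d,r}\big\{[f^{(r)}]^2\big\}^{(d-2r)}$ as a stand-alone lemma and applies integration by parts afterward, whereas you prove the already-integrated version $I_{j,j+d}(g)=\tfrac{(-1)^d}{2}\sum_r t_{d,r}\int g^{(d-2r)}|f^{(j+r)}|^2\,dx$ directly; the inductions are the same, and you should add the one-line reduction to real-valued $f$ so that the expansion $|Tf|^2=\sum a_m^2|f^{(m)}|^2+2\sum_{j<k}a_ja_k f^{(j)}f^{(k)}$ is literally valid.
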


Since one has the trivial inequality
\begin{equation}\label{1.4}
\int_a^b \big|a_n(x)f^{(n)}(x)\big|^2\, dx\geq0,
\end{equation}
we obtain the following as an immediate corollary regarding refinements of the integral inequality \eqref{1.4}:

\begin{corollary}\label{t3.3}
The integral inequality \eqref{3.2} represents a refinement of \eqref{1.4} for all $f\in C_0^\infty((a,b))$ if  for all $x\in (a,b)$ and $0\leq m\leq n-1,\ n\in\bbN$, the following system of $n$ differential inequalities holds$:$
\begin{equation}\label{diffineq}
0\leq c_{n,m}(x)=-a_m^2(x)-\sum_{j=0}^m \sum_{k=\max\{j+1,2m-j\}}^n (-1)^{k-j} t_{k-j,m-j} [a_j(x)a_k(x)]^{(k+j-2m)}.
\end{equation}
\end{corollary}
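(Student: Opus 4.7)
The plan is to derive the corollary as a direct consequence of Theorem~\ref{t3.2}. I would begin by invoking the theorem, which under the stated regularity of the coefficients $a_m$ yields the inequality \eqref{3.2} for every $f\in C_0^\infty((a,b))$, with the weights $c_{n,m}$ computed explicitly via the formula \eqref{3.6} in terms of the combinatorial constants $t_{k-j,m-j}$ from \eqref{2.13}. At this stage no further analytic work is required; the content of the corollary lies entirely in the sign structure of the right-hand side of \eqref{3.2}.

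Next, under the hypothesis \eqref{diffineq} that $c_{n,m}(x)\ge 0$ pointwise on $(a,b)$ for each $0\le m\le n-1$, I would observe that the integrand $c_{n,m}(x)|f^{(m)}(x)|^2$ is nonnegative everywhere on $(a,b)$ as a product of a nonnegative weight and the square of a real or complex modulus. Consequently each of the $n$ integrals on the right of \eqref{3.2} is nonnegative, and hence so is their sum. Combining this with \eqref{3.2} gives the chain
\begin{equation*}
\int_a^b \big|a_n(x)f^{(n)}(x)\big|^2 \, dx \;\ge\; \sum_{m=0}^{n-1}\int_a^b c_{n,m}(x)\big|f^{(m)}(x)\big|^2\, dx \;\ge\; 0,
\end{equation*}
which shows that the lower bound furnished by \eqref{3.2} lies above the trivial bound of zero in \eqref{1.4}, and is therefore a refinement in the sense described in the introduction.

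There is essentially no obstacle here: the corollary is a formal consequence of Theorem~\ref{t3.2} together with the elementary fact that $|f^{(m)}|^2 \ge 0$. The only point that might deserve a brief remark is that this gives a genuine (rather than a vacuous) refinement: one should note that whenever at least one $c_{n,m}$ is not identically zero on $(a,b)$, there exist test functions $f\in C_0^\infty((a,b))$ for which the right-hand side of \eqref{3.2} is strictly positive, so the bound is nontrivially stronger than~\eqref{1.4}. This ensures the word ``refinement'' is used in a substantive way and matches the usage in the references cited in the introduction.
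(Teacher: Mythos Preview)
Your proposal is correct and mirrors the paper's own treatment: the paper presents Corollary~\ref{t3.3} as an immediate consequence of Theorem~\ref{t3.2} together with the trivial bound~\eqref{1.4}, which is exactly the argument you give. Your additional remark on nontriviality of the refinement is a harmless elaboration beyond what the paper states.
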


In Section \ref{s2} we prove Theorem \ref{t3.2}, showing that it follows from an elementary (but as far as we are aware, absent in the literature) identity representing the product of a function and one of its derivatives as a linear combination of derivatives of squares of its intermediate derivatives (cf. Lemma \ref{l2.1}). Then in Section \ref{s3} we show that in the $n=1$ case, Theorem \ref{t3.2} may be used to reduce the proving of inequalities of the form \ref{3.2} to solving a linear ODE (essentially by noting that \ref{3.6} with the inequality replaced with equality is a Riccati equation, when regarded as an ODE for the unknown function $a_0$). This allows us to demonstrate that our framework contains the methods of Hardy improving potentials and Bessel pairs (common in the literature for proving refinements of inequalities, cf. \cite{GM11,GM13}) as a special case. Finally, in Section \ref{s4} we give a series of examples of inequalities proven using Theorem \ref{t3.2}, some of which have appeared in the literature before, others of which are apparently new. In particular, we illustrate the ease of optimizing \eqref{diffineq} as well as utilizing the methods developed in Section \ref{s3}.

Some comments on notation: Superscripts in parentheses refer to derivatives with respect to the variable $x$, whereas those without parentheses are powers; $C_0^\infty((a,b))$ is the space of infinitely differentiable continuous functions on $(a,b)$ with compact support; $\bbN$ represents the positive integers while $\bbN_0=\bbN\cup\{0\}$; $\binom{n}{k}=n!/(k!(n-k)!),\ n,k\in\bbN$ will denote the binomial coefficient.

\section{Main results}\lb{s2}

In this section, we state the starting point of our integral inequality investigations and prove some general derivative identities that will be needed to prove our main results on integral inequality in Theorem \ref{t3.2}.
We begin by considering the $n$th-order differential expression
\begin{equation}\label{2.1}
T=\sum_{k=0}^n a_k(x) \frac{d^k}{dx^k},
\end{equation}
where $a_k\in C^n((a,b)),\ k\in\{0,1,\dots,n\},\ -\infty\leq a<b\leq\infty$ to begin with (though these assumptions will be relaxed later). We further assume that $a_k$ are real-valued throughout. 
The formal adjoint, $T^+$, is then given by
\begin{equation}
T^+=\sum_{k=0}^n (-1)^k \frac{d^k}{dx^k} a_k(x).
\end{equation}
Note that one then has by the general Leibniz rule,
\begin{align}
\begin{split}\label{2.3}
(T^+ T f)(x)&=\sum_{k=0}^n (-1)^k \sum_{j=0}^n \sum_{m=j}^{j+k} {k \choose m} [a_k(x)a_j(x)]^{(k-m+j)} f^{(m)}(x).
\end{split}
\end{align}

In order to develop the theory of inequalities related to factorizations of the type given by the $2n$th-order differential expression $T^+T$, we study inequalities derived from the fact that
\begin{equation}\label{2.6}
0\leq\norm{Tf}^2_{\Lt}=\langle f,T^+Tf\rangle_\Lt,\quad f\in C_0^\infty((a,b)).
\end{equation}
We would like to point out that previous studies utilizing this method used the form of $T^+Tf$ and then integration by parts on $\langle f,T^+Tf\rangle_{L^2}$ as the natural starting point (cf. \cite{GL18,GPPS23,GPS24}). Considering domains of operators associated with the $2n$th-order differential expression $T^+T$ is quite natural as this leads to appropriate function spaces in which \eqref{2.6} holds. In particular, studying the domains of the minimal operator and Friedrichs extension in the self-adjoint setting often allows one not only to extend the function spaces in which the inequality \eqref{2.6} holds but also to prove the optimality of constants appearing in the inequality via spectral theory (cf. \cite[Sect. 5.1]{FS25}, \cite{GPS21}). However, from  \eqref{2.3}, one can see how it becomes quite cumbersome to gather (and cancel) like terms in higher order settings to arrive at an inequality containing only $|f^{(m)}|^2$ terms, motivating our alternative starting point. 

Returning to the consideration of \eqref{2.6}, one notes from \eqref{2.1} that it suffices to study the structure of
\begin{align}\label{2.7}
0\leq\int_a^b |(Tf)(x)|^2 \, dx=\int_a^b \bigg|\sum_{k=0}^n a_k(x) f^{(k)}(x)\bigg|^2\, dx.
\end{align}
The highest order derivative term of $\eqref{2.7}$ would have integrand $\big|a_n(x)f^{(n)}(x)\big|^2$ so that one immediately knows that for this term
\begin{equation}\label{2.9}
\int_a^b \big|a_n(x)f^{(n)}(x)\big|^2\, dx\geq0,
\end{equation}
while any inequality given by \eqref{2.7} would be of the form
\begin{equation}\label{2.10}
\int_a^b \big|a_n(x)f^{(n)}(x)\big|^2\, dx\geq \int_a^b \big|a_n(x)f^{(n)}(x)\big|^2-\bigg|\sum_{k=0}^n a_k(x) f^{(k)}(x)\bigg|^2\, dx.
\end{equation}
For this to represent a refinement of the trivial inequality \eqref{2.9}, one must study the sufficient conditions for the right-hand side of \eqref{2.10} to be positive.

To this end, we will show a convenient and, at first glance, somewhat surprising way one can write the integrand on the right-hand side of \eqref{2.10}. We begin by noting, for real-valued $f$, the integrand can be expressed as
\begin{align}\label{2.11}
\big[a_n(x)f^{(n)}(x)\big]^2-\bigg[\sum_{k=0}^n a_k(x) f^{(k)}(x)\bigg]^2&=-\sum_{k=0}^{n-1} \big[a_k(x)f^{(k)}(x)\big]^2 -2\sum_{\underset{j< k}{j,k=0}}^n a_j(x)a_k(x) f^{(j)}(x)f^{(k)}(x).
\end{align}
The first sum on the right-hand side of \eqref{2.11} is already in the form we need, as it only contains squares of $f$ and its derivatives, without cross-terms. The second sum can be brought into a similar form by applying an appropriate identity. Though useful and intriguing in its own right, we have not found this particular identity in the literature. Therefore, we provide two proofs and some remarks, as the identity seems to us to be of independent interest.

\begin{lemma}\label{l2.1}
For $f\in C^n((a,b)),\ n\in\bbN,\ -\infty\leq a<b\leq\infty$, and $t_{n,k}$ as in Theorem \ref{t3.2}, the following identity holds$:$
\begin{equation}\label{2.12}
2f(x)f^{(n)}(x)=\sum_{k=0}^{\lfloor \frac{n}{2}\rfloor} t_{n,k} \big\{\big[f^{(k)}(x)\big]^2\big\}^{(n-2k)},\quad x\in(a,b).
\end{equation}
\end{lemma}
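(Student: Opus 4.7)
My plan is to prove the identity by induction on $n$. For the base cases I would check $n=0$ directly (where $2f^2 = t_{0,0}\, f^2$ since $t_{0,0}=2$) and $n=1$ (where $2ff' = (f^2)'$ matches $t_{1,0}=1$). Both reduce to the defining values of $t_{n,k}$.

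For the inductive step, assuming the identity has been established at all orders $\le n$, I would exploit the splitting
$$2 f f^{(n+1)} \;=\; \frac{d}{dx}\bigl[2 f f^{(n)}\bigr] \;-\; 2 f'\bigl(f'\bigr)^{(n-1)},$$
and apply the inductive hypothesis at order $n$ to the first term and at order $n-1$, with $f$ replaced by $f'$, to the second term. Shifting the summation index $k \mapsto k-1$ in the second sum aligns both sums on the common terms $\{[f^{(k)}]^2\}^{(n+1-2k)}$, so the identity at order $n+1$ reduces to the three-term recurrence
$$t_{n+1,k} \;=\; t_{n,k} - t_{n-1,k-1}, \qquad 0 \le k \le \lfloor (n+1)/2 \rfloor,$$
with the convention that $t_{n,k} = 0$ whenever $k$ falls outside its stated range.

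Verifying this recurrence is the combinatorial heart of the proof. Using the closed form $t_{n,k} = (-1)^k\bigl[\binom{n-k}{k} + \binom{n-k-1}{k-1}\bigr]$, the interior cases $1 \le k \le \lfloor n/2 \rfloor$ follow from two applications of Pascal's rule. I expect the main obstacle to be the careful bookkeeping at the upper end of the summation, where $\lfloor (n+1)/2 \rfloor - \lfloor n/2 \rfloor$ is $0$ or $1$ according to the parity of $n$: when $n$ is odd a new top-index term $k = (n+1)/2$ appears in the second sum with no counterpart in the first, and one must check separately that $t_{n+1,(n+1)/2} = -t_{n-1,(n-1)/2}$ using the evaluation $t_{2m,m} = 2(-1)^m$. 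The $k=0$ endpoint is trivial since $t_{n-1,-1}$ is taken to vanish, giving $t_{n+1,0} = t_{n,0} = 1$.

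A natural alternative proof, which could serve as the second argument promised in the excerpt, is to expand each $\{[f^{(k)}]^2\}^{(n-2k)}$ on the right-hand side by the Leibniz rule and collect terms by the pair $(i, n-i)$ of derivative orders. Using the symmetry of the binomial coefficients, this reduces the identity to the normalization $\sum_{k=0}^{0} t_{n,k}\binom{n}{0} = 1$ at $i=0$ together with the vanishing family
$$\sum_{k=0}^{i} t_{n,k}\binom{n-2k}{i-k} = 0, \qquad 1 \le i \le \lfloor n/2 \rfloor,$$
which can be verified by elementary manipulations of binomial sums starting from the closed form for $t_{n,k}$.
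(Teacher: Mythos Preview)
Your proposal is correct and follows essentially the same route as the paper's primary proof: the same splitting $2ff^{(n+1)} = [2ff^{(n)}]' - 2f'(f')^{(n-1)}$, strong induction reducing to the recurrence $t_{n+1,k}=t_{n,k}-t_{n-1,k-1}$, and verification of that recurrence from the closed form via Pascal's rule. Your alternative Leibniz-rule argument is in the same spirit as the paper's second proof, though the paper routes it through an auxiliary identity $\sum_{k}(-1)^k\binom{n-k}{k}\{[f^{(k)}]^2\}^{(n-2k)}=\sum_{\ell}f^{(n-\ell)}f^{(\ell)}$ rather than directly verifying the vanishing of your binomial sums.
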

\begin{proof}
Direct calculation readily shows that \eqref{2.12} holds for $n=0,1,2$.
We now proceed by (strong) induction by assuming \eqref{2.12} is true for $n$ and $n-1$. Then
\begin{align}
2f(x)f^{(n+1)}(x)&=2\big[f(x)f^{(n)}(x)]'-2f'(x)f^{(n)}(x)=2\big[f(x)f^{(n)}(x)]'-2f'(x)[f'(x)]^{(n-1)} \notag\\
&=\sum_{k=0}^{\lfloor \frac{n}{2}\rfloor} t_{n,k} \big\{\big[f^{(k)}(x)\big]^2\big\}^{(n+1-2k)}-\sum_{k=0}^{\lfloor \frac{n-1}{2}\rfloor} t_{n-1,k} \big\{\big[f^{(k+1)}(x)\big]^2\big\}^{(n-1-2k)}  \notag\\
&=\big\{[f(x)]^2\big\}^{(n+1)}+\sum_{k=1}^{\lfloor \frac{n+1}{2}\rfloor} (t_{n,k} - t_{n-1,k-1}) \big\{\big[f^{(k)}(x)\big]^2\big\}^{(n+1-2k)},
\end{align}
where we have used the fact that $t_{n,k}=0$ for $k>\lfloor\tfrac{n}{2}\rfloor,$ $n\geq2$.
Hence, it suffices to verify that the coefficients defined in \eqref{2.13} satisfy the recursion $t_{n+1,k}=t_{n,k}-t_{n-1,k-1},\ 1\leq k\leq \lfloor \tfrac{n+1}{2}\rfloor,$
which follows from direct calculation since
\begin{align}
t_{n,k}-t_{n-1,k-1}&=(-1)^k \left[\binom{n-k}{k}+\binom{n-k-1}{k-1} +\binom{n-k}{k-1}+\binom{n-k-1}{k-2} \right] \notag\\
&=(-1)^k \left[\binom{n+1-k}{k}+\binom{n+1-k-1}{k-1} \right] =t_{n+1,k},
\end{align}
completing the proof.
\end{proof}

\begin{remark} 
The coefficients $t_{n,k}$ given in Lemma \ref{l2.1} coincide with the integer sequence \cite[A213234]{OEIS} (see also \cite[A034807]{OEIS}) which gives the coefficient of $\xi^{n-2k}$ in the generalized Lucas polynomial $V_n(\xi,-1)$ (cf. \cite{BH74}). In particular, the coefficient of the $(n-2k)$-th derivative on the right-hand side of \eqref{2.12} agrees with the coefficient of the $(n-2k)$-th power appearing in $V_n(\xi,-1)$. This appears to be a new relationship between this integer sequence and differentiation. Thus, further study of \eqref{2.12}, perhaps relating differentiation to other known interpretations of $t_{n,k}$, would be of natural interest.
\end{remark}

Though the above proof of Lemma \ref{l2.1} is the quickest proof we know, the seemingly most direct proof would be via the general Leibniz rule for $n$-times differentiable functions $f,g$:
\begin{equation}\label{2.20}
(f(x)g(x))^{(n)}=\sum_{k=0}^n \binom{n}{k}f^{(n-k)}(x)g^{(k)}(x).
\end{equation}
However, a direct proof by using the general Leibniz rule on the right-hand side of \eqref{2.12} and applying elementary binomial coefficient identities becomes surprisingly cumbersome. We offer an alternative direct proof of Lemma \ref{l2.1} below by applying a related result, kindly shared with the authors by Daniel Herden in private correspondence \cite{H24}, which is proven via the general Leibniz rule.

\begin{lemma}[\cite{H24}]\label{l2.4}
For $f\in C^n((a,b)),\ n\in\bbN,\ -\infty\leq a<b\leq\infty$, the following identity holds for $x\in(a,b)$$:$
\begin{equation}\label{2.21}
\sum_{k=0}^{\lfloor \frac{n}{2}\rfloor} (-1)^k \binom{n-k}{k} \big\{\big[f^{(k)}(x)\big]^2\big\}^{(n-2k)}=\sum_{\ell=0}^n f^{(n-\ell)}(x)f^{(\ell)}(x).
\end{equation}
\end{lemma}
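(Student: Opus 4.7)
The goal is to verify \eqref{2.21} by directly expanding the left-hand side using the general Leibniz rule \eqref{2.20} and matching coefficients of $f^{(\ell)}(x)f^{(n-\ell)}(x)$ on both sides. Since the right-hand side already stands in this form with each coefficient equal to $1$, the entire task reduces to a combinatorial identity on binomial coefficients.

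The plan is as follows. First, for each $k$ with $0\le k\le \lfloor n/2\rfloor$, apply \eqref{2.20} to expand
\begin{equation*}
\bigl\{\bigl[f^{(k)}(x)\bigr]^{2}\bigr\}^{(n-2k)}=\sum_{j=0}^{n-2k}\binom{n-2k}{j} f^{(k+j)}(x)\,f^{(n-k-j)}(x),
\end{equation*}
and then substitute $\ell = k+j$ so that the inner index runs from $\ell=k$ up to $\ell=n-k$. After interchanging the order of summation, the left-hand side of \eqref{2.21} takes the form $\sum_{\ell=0}^{n} C_{n,\ell}\, f^{(\ell)}(x) f^{(n-\ell)}(x)$ with
\begin{equation*}
C_{n,\ell}=\sum_{k=0}^{\min(\ell,\,n-\ell)} (-1)^{k}\binom{n-k}{k}\binom{n-2k}{\ell-k}.
\end{equation*}
Thus it remains to prove $C_{n,\ell}=1$ for every $0\le \ell\le n$.

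To handle this identity, I would first apply the elementary trinomial rewriting
\begin{equation*}
\binom{n-k}{k}\binom{n-2k}{\ell-k}=\binom{n-k}{\ell}\binom{\ell}{k},
\end{equation*}
which follows from expressing both sides as the common multinomial $\tfrac{(n-k)!}{k!(\ell-k)!(n-k-\ell)!}$. By the symmetry $\ell\leftrightarrow n-\ell$ we may assume $\ell\le n/2$, so the upper limit becomes $\ell$, and it then suffices to show $\sum_{k=0}^{\ell}(-1)^k\binom{\ell}{k}\binom{n-k}{\ell}=1$. Reading $\binom{n-k}{\ell}$ as the coefficient $[x^\ell](1+x)^{n-k}$, the sum equals
\begin{equation*}
[x^\ell]\,(1+x)^{n}\sum_{k=0}^{\ell}(-1)^k\binom{\ell}{k}(1+x)^{-k}=[x^\ell]\,(1+x)^{n}\Bigl(\tfrac{x}{1+x}\Bigr)^{\ell}=[x^\ell]\,x^\ell(1+x)^{n-\ell}=1,
\end{equation*}
which closes the argument.

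The only mild obstacle is the bookkeeping: correctly re-indexing the Leibniz expansion so that the upper limit on the inner $k$-sum becomes $\min(\ell,n-\ell)$ rather than $\lfloor n/2\rfloor$ (recognizing that $\binom{n-2k}{\ell-k}=0$ automatically enforces this), and keeping track of the symmetry $\ell\leftrightarrow n-\ell$ so that no term is double-counted. Once this is handled, the binomial manipulation above is essentially mechanical, and the identity \eqref{2.21} follows.
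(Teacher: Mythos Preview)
Your proof is correct and follows essentially the same route as the paper's: Leibniz expansion, coefficient comparison to reduce to the binomial identity $\sum_{k}(-1)^k\binom{n-k}{k}\binom{n-2k}{\ell-k}=1$, the same trinomial rewrite $\binom{n-k}{k}\binom{n-2k}{\ell-k}=\binom{\ell}{k}\binom{n-k}{\ell}$, and then a generating-function verification. The only cosmetic difference is that you extract $[x^{\ell}]$ from $x^{\ell}(1+x)^{n-\ell}$, whereas the paper reads off the $x^{n-\ell}$ coefficient from $(1-x)^{\ell}/(1-x)^{\ell+1}=1/(1-x)$; these are equivalent manipulations.
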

\begin{proof}

Applying the general Leibniz rule to the left-hand side of \eqref{2.21} and comparing coefficients on each side shows that it suffices to show
\begin{equation}\label{2.22}
\sum_{k=0}^{\lfloor \frac{n}{2}\rfloor} (-1)^k \binom{n-k}{k}\binom{n-2k}{\ell-k}=1,\quad \ell=0,1,\dots,n.
\end{equation}
One can rewrite \eqref{2.22} as
\begin{equation}\
\sum_{k=0}^{\lfloor \frac{n}{2}\rfloor} (-1)^k \binom{\ell}{k} \binom{n-k}{\ell}=1,\quad \ell=0,1,\dots,n,
\end{equation}
which is true by comparing the $x^{n-\ell}$ powers in the following equation where we use binomial series to expand each term:
\begin{equation}
\bigg(\sum_{k=0}^{\infty}(-1)^k\binom{\ell}{k}x^k\bigg)\bigg(\sum_{j=0}^\infty\binom{\ell+j}{\ell}x^j\bigg)=\frac{(1-x)^{\ell}}{(1-x)^{\ell+1}}=\dfrac{1}{1-x}=\sum_{m=0}^\infty x^m.
\end{equation}
\end{proof}

\begin{proof}[Alternate proof of Lemma \ref{l2.1}]
Lemma \ref{l2.1} follows from Lemma \ref{l2.4} by considering the difference
\begin{equation}
2f(x)f^{(n)}(x)=\sum_{\ell=0}^n f^{(n-\ell)}(x)f^{(\ell)}(x)-\sum_{\ell=1}^{n-1} f^{(n-\ell)}(x)f^{(\ell)}(x),
\end{equation}
and then applying \eqref{2.21} and regrouping. Therefore, this method along with the general Leibniz rule proof of Lemma \ref{l2.4} yields a proof relying only on the general Leibniz rule and binomial identities.
\end{proof}

We are now in a position to prove Theorem \ref{t3.2} by applying the above identities.

\begin{proof}[Proof of Theorem \ref{t3.2}]
We first point out that for $f = f_1 + i f_2 \in C_0^\infty((a,b))$, $(a,b)\subseteq\bbR$, with $f_1 = \Re(f)$, $f_2 = \Im(f)$, one has the obvious equality
\begin{equation}
\big|f^{(n)}\big|^2 = \big[f_1^{(n)}\big]^2 + \big[f_2^{(n)}\big]^2,
\end{equation}
and thus the inequality considered holds for all 
$f \in C^{\infty}_0((a,b))$ if and only if it holds for all real-valued 
$f \in C^{\infty}_0((a,b))$. Therefore, without loss of generality, we assume that $f\in C^\infty_0((a,b))$ is real-valued throughout the proof. 

Replacing $f$ by $f^{(j)}$ and substituting the result from Lemma \ref{l2.1} into \eqref{2.11} (noting that the difference in the number of derivatives, there $n$, is here $k-j$) yields
\begin{align}\label{3.1}
\big[a_n&(x)f^{(n)}(x)\big]^2-\bigg[\sum_{k=0}^n a_k(x) f^{(k)}(x)\bigg]^2\\
&=-\sum_{k=0}^{n-1} \big[a_k(x)f^{(k)}(x)\big]^2-\sum_{\underset{j< k}{j,k=0}}^n a_j(x)a_k(x) \sum_{\ell=0}^{\lfloor \frac{k-j}{2}\rfloor} t_{k-j,\ell} \big\{\big[f^{(j+\ell)}(x)\big]^2\big\}^{(k-j-2\ell)}.\notag
\end{align}

Directly substituting \eqref{3.1} into \eqref{2.10} and applying integration by parts ($k-j-2\ell$ times on each term, observing the support properties of $f$) yields
\begin{align}\label{2.21a}
\int_a^b &\big[a_n(x)f^{(n)}(x)\big]^2\, dx\\
&\geq -\sum_{k=0}^{n-1}\int_a^b \big[a_k(x)f^{(k)}(x)\big]^2\, dx+\sum_{\underset{j< k}{j,k=0}}^n \sum_{\ell=0}^{\lfloor \frac{k-j}{2}\rfloor} t_{k-j,\ell}\int_a^b a_j(x)a_k(x) \big\{\big[f^{(j+\ell)}(x)\big]^2\big\}^{(k-j-2\ell)}\, dx \notag\\
&=-\sum_{m=0}^{n-1} \int_a^b\big[a_m(x)f^{(m)}(x)\big]^2\, dx 
 +\sum_{\underset{j< k}{j,k=0}}^n  (-1)^{k-j} \sum_{\ell=0}^{\lfloor \frac{k-j}{2}\rfloor} t_{k-j,\ell} \int_a^b[a_j(x)a_k(x)]^{(k-j-2\ell)}\big[f^{(j+\ell)}(x)\big]^2\, dx.  \notag
\end{align}
Next, after an appropriate grouping of terms, in particular, by considering when $j+\ell=m\in\{0,1,\dots,n-1\}$ (and momentarily ignoring the upper limit in the sum over $\ell$ as including more terms only adds zero), one concludes that
\begin{align}\label{3.5}
\int_a^b \big[a_n(x)f^{(n)} (x)\big]^2 \, dx&\geq -\sum_{m=0}^{n-1}\int_a^b \bigg(a_m^2(x) \\
&\quad \, +\sum_{j=0}^m \sum_{k=j+1}^n (-1)^{k-j} t_{k-j,m-j} [a_j(x)a_k(x)]^{(k+j-2m)}\bigg)\big[f^{(m)}(x)\big]^2\, dx. \notag
\end{align}
Finally, noting that in the last sum in \eqref{3.5}, the coefficients $t_{k-j,m-j}$ are zero if $m-j>\tfrac{k-j}{2}$ or $k<2m-j$, yields \eqref{3.2} as the first nonzero $k$ term will be at $\max\{j+1,2m-j\}$.

The assumptions on $a_m$ and the products $(a_j a_k)$ follow immediately from inspection of \eqref{3.2}.
\end{proof}

Extending the proof of the inequality to more general function spaces where $f$ is not compactly supported can be done by considering the integration by parts step in \eqref{2.21a} (or domain considerations of operators associated with $T^+T$ as previously mentioned). In particular, it suffices to consider functions $f$ such that the boundary terms are zero.

\begin{remark}\label{rem2.4}
We note that the integral inequality \eqref{t3.2} does not require that $a_m\in C((a,b)),\ 0\leq m\leq n,$ or that the product of coefficient functions $(a_j a_k)$ belongs to $C^{k-j}((a,b)),\ 0\leq j<k\leq n$, since the Leibniz rule holds for weak derivatives. In order for the integrals in \eqref{3.5} to be defined, it is enough to require the following:
    \begin{enumerate}
        \item[$(a)$] For each $0 \leq k \leq n$, $a_k \in L^2_{\text{loc}}((a,b))$.
        \item[$(b)$] For each $0\leq j<k \leq n$, the product $a_ja_k$ has weak derivatives of order $k-j-2\ell$ for all $\ell = 0,1,\dots,\lfloor \frac{j+k}{2}\rfloor$.
    \end{enumerate}
    For example, if $n = 1$, we require $a_0,a_1\in L^2_{\text{loc}}((a,b))$ and $(a_0a_1)' \in L^1_{\text{loc}}((a,b))$. 
\end{remark}

\section{Coefficients of main integral inequality}\lb{s3}

In this section, we further investigate the coefficients for the integral inequality given in Theorem \ref{t3.2} (explicitly written in \eqref{3.6}). We also relate our differential inequalities in the first-order case to the notions of Hardy improving (HI) potentials and Bessel pairs.

We begin by studying the coefficient of the integral inequality \eqref{3.2} when $n=1$, namely
\begin{equation}\label{4.1}
c_{1,0}(x)=-a_0^2(x)+[a_0(x)a_1(x)]'.
\end{equation}

As a starting point, suppose one wants to investigate a first-order inequality of the form \eqref{3.2} for some choice of the highest order coefficient, $a_1(x)$, and of the lowest-order one $c_{1,0}(x)=g(x)$ where $g(x) \in L^1_{loc}((a,b))$, $g(x)\geq0$. Then one can set \eqref{4.1} equal to $g(x)$ and solve for the unknown function $a_0(x)$, which becomes equivalent to studying the Riccati equation (assuming $a_1(x)\neq0$ on $(a,b)$)
\begin{equation}\label{4.2}
a_0'(x)=\frac{1}{a_1(x)}a_0^2(x)-\frac{a_1'(x)}{a_1(x)}a_0(x)+\frac{g(x)}{a_1(x)}.
\end{equation}
To transform \eqref{4.2} into a second-order linear equation, we multiply both sides by $1/a_1(x)$ (assuming $1/a_1(x)\neq0$ on $(a,b)$) to rewrite \eqref{4.2} (after subtracting appropriate terms on both sides)
\begin{equation}\label{4.3}
\left(\frac{a_0(x)}{a_1(x)}\right)'=\left(\frac{a_0(x)}{a_1(x)}\right)^2-2\frac{a_1'(x)}{a_1(x)}\left(\frac{a_0(x)}{a_1(x)}\right)+\frac{g(x)}{a_1^2(x)}.
\end{equation}
Next, substituting $a_0(x)/a_1(x)=-u'(x)/u(x)$, with $u(x)\neq 0$ on $(a,b)$, into \eqref{4.3} and rewriting yields the second-order linear differential equation
\begin{equation}\label{4.4}
u''(x)+2\frac{a_1'(x)}{a_1(x)}u'(x)+\frac{g(x)}{a_1^2(x)}u(x)=0.
\end{equation}
Finally, multiplying \eqref{4.4} by $-a_1^2(x)$ yields the Sturm--Liouville equation
\begin{equation}\label{4.5}
-\big(a_1^2(x) u'(x)\big)'-g(x)u(x)=0,\quad x\in(a,b),
\end{equation}
where we note that a solution, $u$, to \eqref{4.5} will yield a solution to \eqref{4.2} given by
\begin{equation}\label{4.6}
a_0(x)=-a_1(x)\frac{u'(x)}{u(x)}.
\end{equation}

We now make a few comments that will be used in our examples. First, by \eqref{4.6}, we note that any constants multiplying $u(x)$ will be canceled out when writing $a_0$. Second, we have assumed that $u(x)\neq0$ on $(a,b)$ in the previous steps which will naturally be manifest in the interval that the underlying inequality will be valid on (and directly connect to the notion of HI potentials). These two observations lead to considering the existence of positive solutions to \eqref{4.5}, that is, oscillation theory for the equation \eqref{4.5}, for which we refer to \cite[Ch. 8]{GNZ24} and the references therein.

We also note that the condition \eqref{4.5} for validity of the integral inequality \eqref{3.2} for $n = 1$ was previously obtained in \cite[Thm. 3.1]{B71} in a slightly more general setting.

\begin{remark}
It is tempting to extend the previous considerations to the coefficients of the integral inequality \eqref{3.2} when $n=2$ given by
\begin{align}\label{4.50}
c_{2,0}(x)&=-a_0^2(x)+[a_0(x)a_1(x)]'-[a_0(x)a_2(x)]'',\quad
c_{2,1}(x)=-a_1^2(x)+2a_0(x)a_2(x)+[a_1(x)a_2(x)]'.
\end{align}
In particular, a very natural way to proceed is for one to treat $a_0(x)$ and $a_1(x)$ as unknown functions in \eqref{4.50} by supposing the highest order coefficient, $a_2(x)$, has been chosen along with $c_{2,0}(x)=g_0(x)$ and $c_{2,1}(x)=g_1(x)$ for some $L^1_{loc}$ functions $g_j(x)\geq0,\ j=0,1$, on $(a,b)$. However, there does not seem to be a canonical way to study this system of two nonlinear equations as in the first-order case. One might try to solve for $a_0(x)$ in the second line of \eqref{4.50} (supposing $a_2(x)\neq0$),
and substitute the resulting expression into the first line, arriving at a third-order nonlinear differential equation for $a_1(x)$. However, this does not seem to simplify the analysis in any practical way.
\end{remark}

\subsection{Relation to Hardy improving potentials and Bessel pairs}\label{s4.1.1}

We now show our method developed here for $n=1$ is an extension of the related notions of Hardy improving potentials and Bessel pairs that were developed in \cite{GM11} and \cite{GM13}.

\begin{definition}[{\cite[Def. 1.1.1]{GM13}}]\label{d4.2a}
We say that a nonnegative real-valued $C^1$ function $P$ is a Hardy improving potential $($HI potential$)$ on $(0,R)$ if there exists $c>0$ such that the following equation has a positive solution on $(0,R)$$:$
\begin{equation}\label{HI}
y''(r)+r^{-1} y'(r)+c P(r) y(r)=0.
\end{equation}
\end{definition}

Note that this notion of Hardy improving potentials is essentially asking when one can improve Hardy's inequality by adding a potential to the underlying Bessel equation that yields the inequality. 

To compare to our method, note that an integrating factor transforms \eqref{HI} to
\begin{equation}
-\big(r y'(r)\big)'-cr P(r) y(r)=0,
\end{equation}
which corresponds to choosing $a_1^2(x)=x$ and $g(x)=cx P(x)$ in \eqref{4.5}. 

Alternatively, one can connect to our method via a Liouville transform by first setting $a_1(x)=1$ and $g(x)=(2x)^{-2}+h(x)$ for some function $h(x)\geq 0$ on $(0,R)$ in \eqref{4.5} to write
\begin{equation}\label{HI2}
-u''(x)-\big((2x)^{-2}+h(x)\big) u(x)=0,\quad x\in(0,R).
\end{equation}
To compare \eqref{HI} and \eqref{HI2}, one applies a Liouville transform as in \cite[Sect. 7]{Ev05} with $k=1,\ K=1$ (see also \cite[Sect. 4]{FPS25}, \cite[Thm. 3.5.1]{GNZ24}) to \eqref{HI}. Direct computation then yields $x(r)=r$ for $r\in(0,R)$ and the new potential function will be $Q(x)=-cP(x)-(2x)^{-2},$
with the solution to the new problem given by $x^{1/2}y(x)$.
Therefore, the equations \eqref{HI} and \eqref{HI2} are equivalent upon identifying $h(x)=cP(r)$ with $x=r$ and $u(x)=x^{1/2}y(x)$ on $(0,R)$.

Therefore, our method developed above is a quite natural extension of HI potentials. Choosing $a_1$ and some $\wti{g}\geq0$ (nonnegativity only assumed to give a refinement but not needed) to give a known inequality of the form
\begin{equation}
\int_a^b a_1^2(x) |f'(x)|^2\, dx \geq \int_a^b \wti{g}(x) |f(x)|^2\, dx,\quad f\in C^\infty_0((a,b)),
\end{equation}
studying solutions to the equation \eqref{4.5} with $g(x)=\wti{g}(x)+h(x)$ for $h(x)\geq0$ will determine the existence of improving inequalities to the original known one. For other results regarding improving Hardy-type inequalities via perturbations of the potential we refer to \cite{GNP23}.

We now turn to comparing our method to the notion of Bessel pairs.

\begin{definition}[{\cite[Sect. 1.3]{GM13}}]
We say that a pair of $C^1$ functions $(V,W)$ is a $k$-dimensional Bessel pair on $(0,R)$ if there exists $c>0$ such that the following equation has a positive solution on $(0,R)$$:$
\begin{equation}\label{3.12a}
y''(r)+\left(\frac{k-1}{r}+\frac{V'(r)}{V(r)}\right) y'(r)+\frac{c W(r)}{V(r)} y(r)=0.
\end{equation}
\end{definition}

Once again, using an integrating factor transforms \eqref{3.12a} into
\begin{equation}
-\big(r^{k-1} V(r) y'(r)\big)'-cr^{k-1} W(r) y(r)=0,
\end{equation}
which corresponds to choosing $a_1^2(x)=x^{k-1} V(x)$ and $g(x)=cx^{k-1} W(x)$ in \eqref{4.5}.

Alternatively, a similar comparison as with HI potentials using a Liouville transform can be performed under additional assumptions on $V$, namely $V>0$ and $V'\in AC_{loc}((a,b))$.

\subsection{Alternative method}
As an alternative way to investigate the coefficient $c_{1,0}(x)$, we note that one could also consider \eqref{4.1} by choosing $c_{1,0}(x)=g(x)\geq0$ on $(a,b)$ again but with $a_1(x)$ as the unknown function, rather than $a_0(x)$. In this case, one simply has the first-order linear differential equation
\begin{equation}
[a_0(x)a_1(x)]'=a_0^2(x)+g(x),
\end{equation}
which, assuming $a_0(x)\neq 0$ on $(a,b)$, has the solution
\begin{equation}
a_1(x)=\frac{1}{a_0(x)}\int^x a_0^2(t)+g(t)\, dt.
\end{equation}
This leads to the family of inequalities
\begin{equation}
\int_a^b \left[\frac{1}{a_0(x)}\int^x [a_0^2(t)+g(t)]\, dt\right]^2 |f'(x)|^2\, dx \geq \int_a^b g(x)|f(x)|^2\, dx,\quad f\in C^\infty_0((a,b)).
\end{equation}

\section{Examples}\lb{s4}

In this section, we apply our previous results to recover known inequalities and refinements as well as prove new inequalities. We note that these examples are meant to be instructive for applying these methods in the $n=1$ and $n=2$ cases, particularly illustrating the analysis of Section \ref{s3} when $n=1$.

\subsection{First-order setting}\lb{s5.1}

Specializing to the case $n=1$, from \eqref{3.2} and \eqref{diffineq} one sees that the integral inequality and associated differential inequality that would need to be satisfied for a refinement are
\begin{align}\label{5.1}
\int_a^b a_1^2(x) |f'(x)|^2\, dx &\geq \int_a^b \big\{-a_0^2(x)+[a_0(x)a_1(x)]'\big\}|f(x)|^2\, dx,\quad f\in C^\infty_0((a,b)),  \notag  
 \\
c_{1,0}(x)&=-a_0^2(x)+\big[a_0(x)a_1(x)\big]'\geq0,\ \text{on}\ (a,b).
\end{align}
We illustrate choosing $a_0$, $a_1$, and optimizing $c_{1,0}(x)\geq0$ as well as the method outlined in Section \ref{s3}.

\begin{example}[Power weighted Hardy's inequality] 
Choosing $a_0(x)=[(\g-1)/2]x^{(\g/2)-1},$ $a_1(x)=x^{\g/2},$ $\g\in\bbR$, on $(0,\infty)$ yields the power weighted Hardy's inequality $($with optimal constant$)$
\begin{equation}\label{eq:weighthardy}
\int_0^\infty x^\gamma |f'(x)|^2\, dx\geq \frac{(1-\gamma)^2}{4}\int_0^\infty x^{\gamma-2}|f(x)|^2\, dx,\ \ \ \g\in\bbR,\ f\in C^\infty_0((0,\infty)).
\end{equation}
To motivate these choices, choosing $a_1(x)=x^{\g/2}$ and $a_0(x)=\a x^{(\g/2)-1},$ $\a,\g\in\bbR$ requires by \eqref{5.1} that $\a(\g-1) x^{\g-2}-\a^2 x^{\g-2}\geq0$ or $\a(\a+1-\g)\leq0$. This is minimized at $\a=(\g-1)/2$, yielding \eqref{eq:weighthardy}.
\end{example}

\subsubsection{Refinements of power weighted Hardy's inequality}

We now illustrate the methods of Section \ref{s3}.

\begin{example}
Note that the method used above to prove \eqref{eq:weighthardy} does not account for variations in the interval, yielding $(1-\g)^2/4$ for all $(a,b)\subseteq(0,\infty)$ with no dependence on the endpoints. To probe this dependence we can use the method outlined in Section \ref{s3} by considering the Sturm--Liouville problem
\begin{equation}
-\big(x^\g u'(x)\big)'-\left(\frac{(1-\g)^2}{4}+c \right)x^{\g-2} u(x)=0,\quad \g\in\bbR,\ c>0,\ x\in(a,b),\ 0<a<b<\infty.
\end{equation}
A solution to this equation is given by $u(x)=x^{(1-\g)/2} \sin\big(\sqrt{c}\, \ln(x/a)\big)$ which is zero at $x=a$ and $c=[\pi/\ln(b/a)]^2$ is the largest choice of $c$ such that $u(x)$ is positive on $(a,b)$. Therefore, choosing $a_1(x)=x^{\g/2}$ and $a_0(x)=-x^{\g/2}u'(x)/u(x)$ with $u(x)=x^{(1-\g)/2} \sin\big(\pi \ln(x/a)\ln(b/a)\big)$ yields the inequality $($with the constant optimal as recently proven in \cite{GP25}$)$
\begin{equation}
\int_a^b x^\gamma |f'(x)|^2\, dx\geq \left(\frac{(1-\gamma)^2}{4}+\frac{\pi^2}{[\ln(b/a)]^2}\right)\int_a^b x^{\gamma-2}|f(x)|^2\, dx,\ \ \ \g\in\bbR,\ f\in C^\infty_0((a,b)),
\end{equation}
when $0<a<b<\infty$.
Notice that the constant becomes $(1-\g)^2/4$ whenever $a\downarrow0$ and/or $b\uparrow\infty$.
\end{example}

\begin{example}[Power weighted Hardy's inequality with log refinement]\label{ex4.2} 
For this example, let $a_1(x)=x^{\g/2}$ and consider the Sturm--Liouville problem, for $\g\in\bbR,\ \eta\in[e_N R,\infty),\ N\in\bbN,\ R\in(0,\infty)$,
\begin{align}\label{4.17a}
-\big(x^\g u'(x)\big)'-\frac{1}{4}x^{\g-2}\Bigg((1-\g)^2+\sum_{k=1}^{N} \prod_{p=1}^{k} [\ln_{p}(\eta/x)]^{-2} \Bigg)u(x)=0,\quad x\in(0,R),
\end{align}
where the iterated logarithms $\ln_k( \, \cdot \, )$ and exponentials $e_k$, $k \in \bbN$, are given by
\begin{equation}\label{4.18a}
\ln_1(\, \cdot \,) = \ln( \, \cdot \, ), \quad \ln_{k+1}( \, \cdot \,) = \ln \big( \ln_k(\, \cdot \,) \big), \quad e_1 =1, \quad e_{j+1} = e^{e_k},\quad k \in \bbN.
\end{equation}
A solution to \eqref{4.17a} is given by the function
\begin{equation}
u_{\g,\eta,N}(x)=x^{(1-\g)/2}\prod_{p=1}^{N} [\ln_{p}(\eta/x)]^{1/2},
\end{equation}
so we further choose $a_0(x)=-x^{\g/2}\big(u'_{\g,\eta,N}(x)/u_{\g,\eta,N}(x)\big)$ to arrive at the inequality \cite[Lemma 2.1]{GPS24}
\begin{align}
\int_0^R x^\g |f'(x)|^2\, dx&\geq \frac{(1-\g)^2}{4} \int_0^R x^{\g-2} |f(x)|^2\, dx +4^{-1}\int_0^R x^{\g-2} \Bigg(\sum_{k=1}^{N} \prod_{p=1}^{k} [\ln_{p}(\eta/x)]^{-2} \Bigg)|f(x)|^2\, dx, \notag\\
&\hspace{1cm} \g\in\bbR,\ \eta\in[e_N R,\infty),\ N\in\bbN,\ R\in(0,\infty),\quad f\in C_0^\infty ((0,R)).
\end{align}
\end{example}

\begin{example}[Power weighted Hardy's inequality with Bessel zero refinement]\label{ex4.1a} 
We prove the following integral inequality which is an extension of several known inequalities $($see \eqref{eq:weighthardy} and \cite[Eq. (3.48)]{GPS21}$)$$:$ 
\begin{align}\label{4.7a}
\int_0^b x^\g |f'(x)|^2\, dx & \geq \frac{(1-\g)^2-(2+\mu-\g)^2\nu^2}{4}\int_0^b x^{\g-2}|f(x)|^2\, dx +\frac{j_{\nu,1}^2(2+\mu-\g)^2}{4b^{2+\mu-\g}}\int_0^b x^\mu |f(x)|^2\, dx,\notag\\
&\hspace{3cm} f\in C_0^\infty((0,b)),\ \nu\in[0,\infty),\ 2+\mu-\g>0,\ b\in(0,\infty),
\end{align}
where the first coefficient on the right-hand side of \eqref{4.7a} is positive for
\begin{equation}
0\leq\nu< \frac{|1-\g|}{2+\mu-\g},\quad 2+\mu-\g>0,
\end{equation}
and $j_{\nu,k}$ denotes the $k$th positive zero of the Bessel function $J_\nu(\dott)$. Notice that the form of \eqref{4.7a} allows one to adjust the coefficients on each integral to more heavily weight one over the other.

The choices of coefficients that lead to this from factorization are not at all obvious and given by
\begin{equation}\label{4.9a}
a_1(x)=x^{\g/2},\quad a_0(x)=\frac{\g-1}{2}x^{(\g/2)-1}-x^{\g/2}\frac{\big[J_\nu\big(j_{\nu,1}(x/b)^{(2+\mu-\g)/2}\big)\big]'}{J_\nu\big(j_{\nu,1}(x/b)^{(2+\mu-\g)/2}\big)},
\end{equation}
with $\nu\in[0,\infty)$ and $2+\mu-\g>0$. From \eqref{4.1}, one can readily verify that these choices do indeed yield the inequality \eqref{4.7a}, with the methods of Section \ref{s3} the motivation for these choices.

Beginning with the power weighted Hardy's inequality on $(0,\infty)$, we choose $a_1(x)=x^{\g/2}$ and investigate a perturbation of the underlying potential for Hardy on $(0,b)$ with $b\in(0,\infty)$ of the form
\begin{equation}
c_{1,0}(x)=g(x)=\frac{(1-\g)^2-(2+\mu-\g)^2\nu^2}{4}x^{\g-2}+c x^\mu,\quad \nu\in[0,\infty),\ 2+\mu-\g>0,\ c>0.
\end{equation}
Then from \eqref{4.5} we consider the Sturm--Liouville problem, for $\nu\in[0,\infty),\ 2+\mu-\g>0$,
\begin{align}\label{4.11a}
-\big(x^\g u'(x)\big)'-\left(\frac{(1-\g)^2-(2+\mu-\g)^2\nu^2}{4}x^{\g-2}+c x^\mu\right)u(x)=0,\quad x\in(0,b).
\end{align}
We remark that the parameter choices here are to ensure that the differential equation is nonoscillatory.
Two linearly independent solutions to \eqref{4.11a} are given by
\begin{align}\label{4.12a}
u_1(x)&=x^{(1-\g)/2} J_\nu\big(2c^{1/2}x^{(2+\mu-\g)/2}/(2+\mu-\g)\big),\\
u_2(x)&=x^{(1-\g)/2} Y_\nu\big(2c^{1/2}x^{(2+\mu-\g)/2}/(2+\mu-\g)\big).
\end{align}
Recalling equation \eqref{4.6} for $a_0$, one must find the largest first root of any linear combination of $u_1$ and $u_2$. A straightforward analysis using properties of Bessel functions zeros $($see, e.g., \cite[Eqs. 10.21.2 and 10.21.3]{DLMF}$)$ yields that $j_{\nu,1}$ is the largest first zero of any solution to \eqref{4.11a}. Hence choosing $u(x)=u_1(x)$ from \eqref{4.12a} provides the choice for $a_0$ on $(0,b)$ given in \eqref{4.9a} $($via \eqref{4.6}$)$, which yields \eqref{4.7a} with
\begin{equation}
c^{1/2}=b^{-(2+\mu-\g)/2}(2+\mu-\g)j_{\nu,1}/2,\quad \nu\in[0,\infty),\ 2+\mu-\g>0.
\end{equation}
\end{example}

\begin{example}[Power weighted distance to the boundary Bessel refinement]\label{ex4.3} 
For this example, we let $a_1(x)=d^{\g/2}_{(a,b)}(x)$ where $d_{(a,b)}(x)$ represents the distance to the boundary function defined by
\begin{equation}\label{4.22a}
d_{(a,b)}(x)=\begin{cases}
x-a,& x\in(a,(b+a)/2],\\
b-x,& x\in[(b+a)/2,b),
\end{cases}
\end{equation}
and, for $c>0$, $\nu\in[0,\infty),\ 2+\mu-\g>0$, and $x\in(a,b),\ a,b\in\bbR$, consider the Sturm--Liouville problem
\begin{align}\label{4.23a}
-\big(d^\g_{(a,b)}(x) u'(x)\big)'-\left(\frac{(1-\g)^2-(2+\mu-\g)^2\nu^2}{4}d^{\g-2}_{(a,b)}(x)+c d^\mu_{(a,b)}(x)\right)u(x)=0.
\end{align}
A solution to \eqref{4.23a} is given by the function
\begin{equation}\label{4.24a}
u_{\g,\mu,\nu}(x)=\begin{cases}
(x-a)^{\frac{1-\g}{2}}J_\nu\bigg(\dfrac{2c^{1/2} (x-a)^{\frac{2+\mu-\g}{2}}}{2+\mu-\g}\bigg),& x\in(a,(b+a)/2],\\[4mm]
A_{\g,\mu,\nu,c}(b-x)^{\frac{1-\g}{2}}J_\nu\bigg(\dfrac{2c^{1/2} (b-x)^{\frac{2+\mu-\g}{2}}}{2+\mu-\g}\bigg)\\[4mm]
\quad+B_{\g,\mu,\nu,c}(b-x)^{\frac{1-\g}{2}}Y_\nu\bigg(\dfrac{2c^{1/2} (b-x)^{\frac{2+\mu-\g}{2}}}{2+\mu-\g}\bigg),& x\in[(b+a)/2,b),
\end{cases}
\end{equation}
where
\begin{align}
\begin{split}
A_{\g,\mu,\nu,c}&=\Bigg\{1-\frac{2\pi}{\g-\mu-2}\bigg(\frac{b-a}{2}\bigg)^{\frac{\g+1}{2}} Y_\nu\bigg(\dfrac{2c^{1/2} (\frac{b-a}{2})^{\frac{2+\mu-\g}{2}}}{2+\mu-\g}\bigg)\\
&\qquad \times\Bigg[(x-a)^{\frac{1-\g}{2}}J_\nu\bigg(\dfrac{2c^{1/2} (x-a)^{\frac{2+\mu-\g}{2}}}{2+\mu-\g}\bigg)\Bigg]'\Bigg|_{x=\frac{b+a}{2}}\Bigg\},
\end{split}\\
\begin{split}
B_{\g,\mu,\nu,c}&=\frac{2\pi}{\g-\mu-2}\bigg(\frac{b-a}{2}\bigg)^{\frac{\g+1}{2}} J_\nu\bigg(\dfrac{2c^{1/2} (\frac{b-a}{2})^{\frac{2+\mu-\g}{2}}}{2+\mu-\g}\bigg)\\
&\qquad \times\Bigg[(x-a)^{\frac{1-\g}{2}}J_\nu\bigg(\dfrac{2c^{1/2} (x-a)^{\frac{2+\mu-\g}{2}}}{2+\mu-\g}\bigg)\Bigg]'\Bigg|_{x=\frac{b+a}{2}}.
\end{split}
\end{align}
The applicable choice of $c$ in this example is such that the shared derivative terms in $A_{\g,\mu,\nu,c}$ and $B_{\g,\mu,\nu,c}$ become zero, or explicitly,
\begin{equation}\label{4.32}
c^{1/2}=\bigg(\frac{b-a}{2}\bigg)^{\frac{\g-2-\mu}{2}}\dfrac{2+\mu-\g}{2}\lambda_{\g,\mu,\nu,1},
\end{equation}
where $\lambda_{\g,\mu,\nu,1}$ is the first positive zero of the function
\begin{align}\label{lambdafunc}
G_{\g,\mu,\nu}(z)=(1-\g) J_\nu(z)+(2+\mu-\g)z \frac{d}{dz}[J_\nu(z)]=(1-\g) J_\nu(z)&+\frac{2+\mu-\g}{2}z[J_{\nu-1}(z)-J_{\nu+1}(z)], \notag\\
&\nu\in[0,\infty),\ 2+\mu-\gamma>0.
\end{align}
We remark that $\lambda_{\g,\mu,\nu,1}$ is the smallest Dirichlet--Neumann-type eigenvalue of the generalized Bessel operator associated with \eqref{4.11a} on $(a,(b+a)/2)$, and refer to \cite{FS25} for more details in this direction. In particular, it was shown in \cite[Sect. 5.1]{FS25} that such constants appearing in  integral inequalities stemming from Sturm--Liouville problems with symmetric coefficient functions $($like the current example$)$ are always the Dirichlet--Neumann-type eigenvalue from the half-interval problem. We further point out that $\lambda_{0,0,0,1}$ is sometimes referred to as Lamb's constant $($see the brief discussion in \cite{AW07}$)$.

So choosing $a_0(x)=-d^{\g/2}_{(a,b)}(x)\big(u'_{\g,\mu,\nu}(x)/u_{\g,\mu,\nu}(x)\big)$ yields the integral inequality
\begin{align}\label{4.20a}
\begin{split}
\int_a^b d_{(a,b)}^\g(x) |f'(x)|^2\, dx  \geq \frac{(1-\g)^2-(2+\mu-\g)^2\nu^2}{4}\int_a^b d_{(a,b)}^{\g-2}(x)|f(x)|^2\, dx &\\
+\left(\frac{b-a}{2}\right)^{\g-\mu-2}\frac{(2+\mu-\g)^2}{4}\lambda_{\g,\mu,\nu,1}^2\int_a^b d_{(a,b)}^\mu(x) |f(x)|^2\, dx&,  \\
\nu\in[0,\infty),\ 2+\mu-\g>0,\ a,b\in\bbR,\quad f\in C^\infty_0((a,b))&,
\end{split}
\end{align}
where $\lambda_{\g,\mu,\nu,1}$ denotes the first zero of \eqref{lambdafunc}.
As before, the first coefficient on the right-hand side of \eqref{4.20a} is positive for
\begin{equation}
0\leq\nu< \frac{|1-\g|}{2+\mu-\g},\quad 2+\mu-\g>0.
\end{equation}
\end{example}

\begin{example}
As a final example motivated by the previous examples, we consider the Sturm--Liouville problem, for $\eta\in[R,\infty),\ R\in(0,\infty)$,
\begin{align}
-\big([1/\ln(\eta/x)]u'(x)\big)'-\frac{ [\ln(\eta/x)]^{2} - 2\ln(\eta/x)+3}{4 x^2[ \ln(\eta/x)]^{3}} u(x)-\frac{c u(x)}{\ln(\eta/x)}=0,\quad x\in(0,R).
\end{align}
A solution to this equation is given by
\begin{equation}
u_\eta(x)=x^{1/2}[\ln(\eta/x)]^{1/2}J_0\big(x c^{1/2}\big),
\end{equation}
so that choosing $c= j_{0,1}^2/R^2$, $a_1(x)=[1/\ln(\eta/x)]$, and $a_0(x)=-[\ln(\eta/x)]^{1/2}\big(u'_\eta(x)/u_\eta(x)\big)$ yields
\begin{align}
\begin{split}
\int_0^R \frac{|f'(x)|^2}{\ln(\eta/x)}\, dx\geq  \int_0^R \frac{ [\ln(\eta/x)]^{2} - 2\ln(\eta/x)+3}{4 x^2[ \ln(\eta/x)]^{3}} |f(x)|^2\, dx +\frac{j_{0,1}^2}{R^2}\int_0^R \frac{|f(x)|^2}{\ln(\eta/x)}\, dx,\\
\eta\in[R,\infty),\  R\in(0,\infty),\quad f\in C_0^\infty ((0,R)),
\end{split}
\end{align}
where $j_{0,1}$ is once again the first positive zero of the Bessel function $J_0(\dott)$. We point out that all of the integrands in this inequality are nonnegative under the given assumptions.
\end{example}

\subsubsection{Trigonometric refinements of Hardy's inequality and related inequalities}
\begin{example}
The choices $a_0(x)=-(1/2)\cot(x)$ and $a_1(x)=1$ on $(0,\pi)$ recover the following refinement proven in \cite{GPS21} $($with optimal constant$)$$:$
\begin{equation}\label{eq:trighardy}
\int_0^\pi |f'(x)|^2\, dx \geq \frac{1}{4}\int_0^\pi |f(x)|^2\, dx + \frac{1}{4}\int_0^\pi \frac{|f(x)|^2}{\sin^2(x)}\, dx,\quad f\in C^\infty_0((0,\pi)) .
\end{equation}
To once again motivate these choices, notice that by choosing $a_1(x)=1$ and $a_0(x)=\a\cot(x)$ on $(0,\pi)$, the requirement for nonnegativity in \eqref{5.1} becomes
\begin{equation}
[\a\cot(x)]'-\a^2\cot^2(x)=-\a\csc^2(x)-\a^2\big[\csc^2(x)-1\big]=-\big(\a^2+\a\big)\csc^2(x)+\a^2\geq0.
\end{equation}
This provides a family of refined inequalities if one has $-(\a^2+\a)\geq0$, that is, if $\a\in[-1,0]$. If one minimizes $\a^2+\a=\a(\a+1)$ by choosing $\a=-1/2$, one recovers the refinement \eqref{eq:trighardy}.
\end{example}

\begin{example}
Choosing $a_0(x)=\a\tan(x)-\beta\cot(x),$ $\a,\b\in\bbR,$ and $a_1(x)=1$ on $(0,\pi/2)$ yields, for $\a,\b\in\bbR,$ and $f\in C^\infty_0((0,\pi/2))$, the inequality
\begin{align}
\int_{0}^{\pi/2} |f'(x)|^2\, dx &\geq (\a+\b)^2\int_{0}^{\pi/2} |f(x)|^2\, dx + \big(\b-\b^2\big)\int_{0}^{\pi/2} \frac{|f(x)|^2}{\sin^2(x)}\, dx+ \big(\a-\a^2\big)\int_{0}^{\pi/2} \frac{|f(x)|^2}{\cos^2(x)}\, dx,
\end{align}
which has all nonnegative coefficients for $\a,\b\in[0,1]$ with the last two terms maximized with the choice $\a=\b=1/2$ leading to the inequality for $f\in C^\infty_0((0,\pi/2))$
\begin{equation}
\int_{0}^{\pi/2} |f'(x)|^2\, dx \geq \int_{0}^{\pi/2} |f(x)|^2\, dx + \frac{1}{4}\int_{0}^{\pi/2} \frac{|f(x)|^2}{\sin^2(x)}\, dx+ \frac{1}{4}\int_{0}^{\pi/2} \frac{|f(x)|^2}{\cos^2(x)}\, dx .
\end{equation}
This inequality, and the optimality of the constants, also stems from the spectral theoretic study of the Schr\"odinger operator with P\"oschl--Teller potential $($see, e.g., the recent study \cite{FS25a}$)$.
\end{example}

\begin{example}
Choosing $a_0(x)=\a\tanh(x)-\b\coth(x),$ $\a,\b\in\bbR,$ and $a_1(x)=1$ on $(0,\infty)$ yields for $\a,\b\in\bbR$ and $f\in C^\infty_0((0,\infty)),$
\begin{align}
\int_{0}^{\infty} |f'(x)|^2\, dx &\geq -(\a-\b)^2\int_{0}^{\infty} |f(x)|^2\, dx + \big(\a+\a^2\big)\int_{0}^{\infty} \frac{|f(x)|^2}{\cosh^2(x)}\, dx + \big(\b-\b^2\big)\int_{0}^{\infty} \frac{|f(x)|^2}{\sinh^2(x)}\, dx.
\end{align}
Hence, choosing $\a=\b$ and noting the coefficient on the third integral is maximized when $\b=1/2$ yields
\begin{equation}\label{eq:trighardylast}
\int_{0}^{\infty} |f'(x)|^2\, dx \geq \frac{3}{4}\int_{0}^{\infty} \frac{|f(x)|^2}{\cosh^2(x)}\, dx + \frac{1}{4}\int_{0}^{\infty} \frac{|f(x)|^2}{\sinh^2(x)}\, dx,\quad f\in C^\infty_0((0,\infty)).
\end{equation}
\end{example}

\begin{example}
Motivated by the power weighted Hardy's inequality \eqref{eq:weighthardy}, it is natural to consider what type of inequality can one expect when introducing a trigonometric weight to the first derivative term. In this direction, we illustrate one trigonometric weighted inequality as follows: choosing $a_1(x)=\csc(x)$ and $a_0(x)=\a\cot(x),\ \a\in\bbR,$ on $(0,\pi)$ yields
\begin{equation}\label{eq:trigweight}
\int_{0}^{\pi} \frac{|f'(x)|^2}{\sin^2(x)}\, dx \geq \int_{0}^{\pi} F_\a(x)|f(x)|^2\, dx,\quad \a\in\bbR,\ f\in C^\infty_0((0,\pi)),
\end{equation}
where
\begin{align}
F_\a(x)&=-\a\big[\a\cot^2(x)+\csc(x)+2\cot^2(x)\csc(x)\big]=-\a\big[2\csc^3(x)+\a\csc^2(x)-\csc(x)-\a\big].
\end{align}
We note that $F_\a(x)\geq0$ for $x\in(0,\pi)$ whenever
\begin{equation}\label{range}
\a\in\big[-(1/4)\big(4(5+\sqrt{17})^{1/2}+(14+2\sqrt{17})^{1/2}\big),0\big]=[-4.1995\dots,0].
\end{equation}
\end{example}

\begin{example}
A final inequality we offer that is in a similar spirit is combining a power weight and trig functions. Choosing $a_0(x)=\a\cot(x),\ a_1(x)=x^{\g/2},$ on $(0,1)$ yields, for $\a\in[-1,0],\ \g\leq0$,
\begin{align}
\int_0^1 x^\g |f'(x)|^2\, dx & \geq \a^2\int_0^1 |f(x)|^2\, dx-\a\int_0^1 \frac{\a+x^{\g/2}}{\sin^2(x)}|f(x)|^2\, dx+\frac{\a\g}{2}\int_0^1 x^{(\g/2)-1} \cot (x) |f(x)|^2\, dx,
\end{align}
where one readily verifies that each coefficient is positive under the given choices of parameters $($as one has $x^{\g/2}\geq1$ on $(0,1)$ since $\g$ is nonpositive$)$.
\end{example}

\subsection{Second-order setting}\lb{s5.2}

In the case $n=2$, from \eqref{3.2} and \eqref{diffineq} one sees that the integral inequality and associated differential inequality that would need to be satisfied for a refinement are
\begin{align}\label{5.18}
\int_a^b a_2^2(x) |f''(x)|^2\,  dx &\geq \int_a^b \big\{-a_1^2(x)+[a_1(x)a_2(x)]'+2a_0(x)a_2(x)\big\}|f'(x)|^2\, dx  \notag \\
&\quad\, + \int_a^b \big\{-a_0^2(x)+[a_0(x)a_1(x)]'-[a_0(x)a_2(x)]''\big\}|f(x)|^2\, dx,\quad f\in C^\infty_0((a,b)), \notag \\
&\hspace{-1.7cm}\begin{array}{cl}
&c_{2,1}(x)=-a_1^2(x)+[a_1(x)a_2(x)]'+2a_0(x)a_2(x)\geq0,\\
&c_{2,0}(x)=-a_0^2(x)+[a_0(x)a_1(x)]'-[a_0(x)a_2(x)]''\geq0,
\end{array}
\ \text{on}\ (a,b).
\end{align}

\begin{example}[Power weighted Rellich's inequality] 
Choosing 
\begin{align}
\begin{split}
a_0(x)&=[\a(1-\a-\g)/2 ]x^{(\g/2)-2},\ a_1(x)=\a x^{(\g/2)-1},\ a_2(x)=-x^{\g/2},\quad \g\in\bbR,\ x\in(0,\infty),
\end{split}
\end{align}
where
\begin{equation}\label{4.41a}
\a=\a_\pm=2-\g \pm \big[\big((\g-2)^2+1\big)/2\big]^{1/2},
\end{equation}
yields the power weighted Rellich's inequality $($with optimal constant$)$
\begin{equation}
\int_0^\infty x^\gamma |f''(x)|^2\, dx\geq [(\g-1) (\g - 3) /4]^2\int_0^\infty x^{\gamma-4}|f(x)|^2\, dx,\quad \g\in\bbR,\quad f\in C^\infty_0((0,\infty)).
\end{equation}
Taking $\gamma=0$ now yields the classical Rellich's inequality with constant $9/16$.

To find the appropriate choices above, one starts with
\begin{equation}
a_2(x)=-x^{\g/2},\quad a_1(x)=\a x^{(\g/2)-1},\quad a_0(x)=\b x^{(\g/2)-2},\quad \a,\b,\g\in\bbR.
\end{equation}
Then the inequalities in \eqref{5.18} reduce to the parameter inequalities for $\a,\b,\g\in\bbR$,
\begin{align}\label{a.10}
\begin{split}
-\a^2-\a(\g-1)-2\b\geq0,\quad
-\b^2+\a\b(\g-3)+\b(\g-2)(\g-3)\geq0.
\end{split}
\end{align}
Hence the underlying parameterized family of refined weighted inequalities must satisfy \eqref{a.10}. Choosing $\b=\a(1-\a-\g)/2$ results in the first inequality of \eqref{a.10} becoming equality, and the second becomes $F_\g(\a)\geq0$ where $($which coincides with the result found in \cite[Cor. 2.3]{GPPS23}$)$
\begin{equation}
F_\g(\a)=\a(1-\a-\g)[\a(\g-3)-(\a/2)(1-\a-\g)+(\g-2)(\g-3)]/2.
\end{equation}
In particular, maximizing $F_\g(\alpha)$ with respect to $\alpha$ yields maxima at \eqref{4.41a}.
\end{example}

\begin{example}[trigonometric inequality]
Choosing $a_2(x) = 1$, $a_1 = \alpha \cot(x)$, the first inequality in \eqref{5.18} becomes $2a_0 - \alpha (1+(1+\alpha)\cot^2(x))\geq0 $. Letting $\alpha = -1$ and $a_0=\b\in\mathbb{R}$ yields
\begin{align}
    \label{eq:trigrellich2}
    \int_0^\pi |f''(x)|^2\, dx \geq (2\beta +1)\int_0^\pi |f'(x)|^2\, dx + \int_0^\pi \Big(\frac{\beta}{\sin^2(x)}-\beta^2\Big)|f(x)|^2\, dx,\quad f\in C_0^\infty((0,\pi)),
\end{align}
with the function multiplying $|f|^2$ being nonnegative for $\beta\in[0,1]$ as $1/\sin^2(\pi/2)=1$.
\end{example}

\medskip

\noindent {\bf Acknowledgments.} We are indebted to Daniel Herden, Jan Lang, and Saleh Tanveer for helpful conversations regarding this project. JS was supported in part by an AMS--Simons Travel Grant.


\end{document}